\newtheorem{thm}{Theorem}
\newtheorem{lem}{Lemma}
\newtheorem{corol}{Corollary}
\newtheorem{claim}{Claim}
\theoremstyle{definition}
\newtheorem{rmk}{Remark}
\newcommand{\cE}{\mathcal{E}}
\newcommand{\cK}{\mathcal{K}}
\newcommand{\cL}{\mathcal{L}}
\newcommand{\cF}{\mathcal{F}}
\newcommand{\cV}{\mathcal{V}}
\newcommand{\cI}{\mathcal{I}}
\newcommand{\cU}{\mathcal{U}}
\newcommand{\cO}{\mathcal{O}}
\newcommand{\cHom}{\mathcal{H}om}
\newcommand{\cExt}{\mathcal{E}xt}
\DeclareMathOperator{\Spl}{\mathrm{Spl}}
\DeclareMathOperator{\cok}{coker}
\DeclareMathOperator{\rk}{rk}
\DeclareMathOperator{\Ext}{Ext}
\DeclareMathOperator{\Hom}{Hom}
\DeclareMathOperator{\End}{End}
\DeclareMathOperator{\Hilb}{Hilb}
\DeclareMathOperator{\HH}{H}
\DeclareMathOperator{\hh}{h}
\DeclareMathOperator{\rM}{M}
\newcommand{\rR}{\boldsymbol{R}}
\DeclareMathOperator{\ext}{ext}
\def\p{p}
\newcommand{\ZZ}{\mathbb Z}
\newcommand{\QQ}{\mathbb Q}
\newcommand{\PP}{\mathbb P}
\newcommand{\bk}{{\boldsymbol k}}
\newcommand{\Pic}{\mathrm{Pic}}
\newcommand{\mono}{\hookrightarrow}
\newcommand{\epi}{\twoheadrightarrow}
\newcommand{\sm}{\mathrm{sm}}
\newcommand{\rp}{\mathrm{p}}
\newcommand{\rP}{\mathrm{P}}
\newcommand{\rN}{\mathrm{N}}
\newcommand{\rsta}{\mathrm{N}}
\begin{document}

\begin{abstract}
We show that any polarized K3 surface supports special Ulrich bundles of rank $2$.
\end{abstract}

\setul{1pt}{1pt}



\title{Ulrich bundles on K3 surfaces}

\author{Daniele Faenzi}
\email{daniele.faenzi@u-bourgogne.fr}
\address{Institut de Mathématiques de Bourgogne,
UMR CNRS 5584,
Université de Bourgogne et Franche-Comté,
9 Avenue Alain Savary,
BP 47870,
21078 Dijon Cedex,
France}
\thanks{Author 
  partially supported by ISITE-BFC project (contract ANR-lS-IDEX-OOOB)}
\keywords{ACM vector sheaves and bundles, Ulrich sheaves, K3 surfaces}
\subjclass[2010]{14F05; 13C14; 14J60}

\maketitle


Given an $n$-dimensional closed subvariety $X \subset
\PP^N$, a coherent sheaf $\cF$ on $X$ is Ulrich if
$\HH^*(\cF(-t))=0$ for $1 \le t \le n$.
We refer to \cite{coskun:ulrich-survey,beauvill:ulrich-intro} for
an introduction. We mention that Ulrich sheaves are related to
Chow forms (this was the main motivation for they study in
\cite{eisenbud-schreyer-weyman}), to determinantal representations and
generalized Clifford algebras, to Boij-Söderberg theory (cf.
\cite{eisenbud-schreyer:betti-numbers}) to the minimal resolution
conjecture, and to the representation
type of varieties (cf. \cite{daniele-joan:wild}).

Conjecturally, Ulrich sheaves exist for any $X$, see
\cite{eisenbud-schreyer-weyman}.  They are known to exist for
several classes of varieties e.g. complete intersections, curves, Veronese,
Segre, Grassmann varieties. 
Low-rank Ulrich bundles on surfaces have been studied
intensively, and Ulrich bundles of rank $2$ (or sometimes $1$) are
known in many cases. We refer to
\cite{casnati:special-non-special,beauvill:ulrich-intro} for a survey
and further references. Let us only review some of the cases that are most relevant for us, namely among
surfaces with  
trivial canonical bundle.

In \cite{beauville:abelian-ulrich}, Ulrich bundles of rank $2$ are
proved to exist on abelian surfaces.
In \cite{aprodu-ortega-farkas:crelle}, it is proved that K3 surfaces
support Ulrich bundles 
of rank $2$, provided that some Noether-Lefschetz open condition is
satisfied. The case of quartic surfaces was previously analyzed in
detail in \cite{coskun-kulkarni-mustopa:quartic}.
The main techniques used so far are the Serre construction starting
from points on $X$ and Lazarsfeld-Mukai bundles.

\medskip

In this note, we show that any K3 surface supports an Ulrich bundle $\cE$ of
rank $2$ with $c_1(\cE)=3H$, for any polarization $H$.
So these bundles are \textit{special}, cf.
\cite{eisenbud-schreyer-weyman}.
 We allow singular surfaces with trivial canonical bundle.
The main tool is an enhancement of Serre's construction based
on unobstructedness of simple sheaves on a $K3$ surface.

\medskip
Let us state the result more precisely.
We work over an algebraically closed field $\bk$.
Let $X$ be an integral (i.e. reduced and irreducible) projective surface with $\omega_X \simeq
\cO_X$ and $\HH^1(\cO_X)=0$. We denote by $X_{\sm}$ the smooth locus
of $X$.

Fix a very ample divisor $H$ on $X$. Under the closed embedding given by the complete linear
series $|\cO_X(H)|$ we may view $X$ as a subvariety of some projective
space $\PP^g$. A  hyperplane section $C$ of $X$ is a
projective Gorenstein curve of arithmetic genus $g$ with $\omega_C \simeq
\cO_C(H)$, where $H$ also denotes the restriction of $H$ to $C$. We
may choose $C$ to be integral too.

\medskip

A locally Cohen-Macaulay sheaf $\cE$ on $X$ is arithmetically
Cohen-Macaulay (ACM) 
if $\HH^1(\cE(tH))=0$  for all $t \in \ZZ$. A special class of ACM
sheaves are Ulrich sheaves, which are characterized by the property
$\HH^*(\cE(-tH))=0$ for $t=1,2$.
Of course all these notions depend on the polarization $H$.
We call simple a sheaf whose only
endomorphisms are homotheties.

\medskip

\begin{thm} \label{main}
  Let $X$ and $H$ be as above. Then there exists a simple Ulrich vector
  bundle of rank $2$ on $X$ whose determinant is $\cO_X(3H)$.
\end{thm}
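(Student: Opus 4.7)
The plan is to realize $\cE$ as a Serre-type extension
$$0 \to \cO_X \to \cE \to \cI_Z(3H) \to 0,$$
where $Z \subset X_{\sm}$ is a $0$-dimensional subscheme. A Riemann-Roch computation on $X$ (using $\omega_X \simeq \cO_X$, $\chi(\cO_X) = 2$ and $H^2 = 2g - 2$) forces $|Z| = c_2(\cE) = 5g - 1$, so the Mukai vector of $\cE$ is $v = (2, 3H, 4g - 6)$ with $v^2 = 2g + 6$.

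To translate the Ulrich condition I would twist the Serre extension by $\cO_X(-tH)$ for $t = 1, 2$ and use $\HH^i(\cO_X(-tH)) = 0$ for $i < 2$, which follows from ampleness of $H$ together with $\HH^1(\cO_X) = 0$. The resulting long exact sequences identify $\HH^0(\cE(-H)) = \HH^0(\cI_Z(2H))$ and $\HH^0(\cE(-2H)) = \HH^0(\cI_Z(H))$, and they present a connecting map $\delta \colon \HH^1(\cI_Z(2H)) \to \HH^2(\cO_X(-H))$ between spaces of equal dimension $g+1$. Serre duality (enabled by $\omega_X \simeq \cO_X$ and $\cE^\vee \simeq \cE(-3H)$) interchanges the cohomological data at $t = 1$ and $t = 2$, so the Ulrich property reduces to (i) $Z$ avoiding every divisor in $|H|$ and $|2H|$, and (ii) $\delta$ being an isomorphism.

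Existence of the extension requires a nonzero class in $\Ext^1(\cI_Z(3H), \cO_X)$, which amounts to $Z$ not imposing independent conditions on $|3H|$, and the Cayley-Bacharach property is the additional requirement ensuring that $\cE$ is locally free. These force $Z$ to be rather special, in tension with the generic-position requirements in (i). This is where the announced enhancement of Serre's construction intervenes: rather than realize all conditions on a single explicit $Z$, I would first build a simple (but possibly not locally free) torsion-free sheaf $\cE_0$ of Mukai vector $v$, for instance from a Serre extension supported on a $0$-dimensional $Z_0$ contained in a suitable curve of $|3H|$ so that $\Ext^1(\cI_{Z_0}(3H),\cO_X) \ne 0$. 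Because $\omega_X \simeq \cO_X$, Mukai's argument applies: the $\Ext^2$ obstruction of any simple sheaf is spanned by its trace, hence obstructions to first-order deformations vanish, and $\cE_0$ sits at a smooth point of the moduli space $\cM$ of simple sheaves of Mukai vector $v$, of dimension $v^2 + 2 = 2g + 8$. In $\cM$ both the locally-free locus and the Ulrich locus are open; a codimension count (using $v^2 > 0$) ensures non-locally-free torsion-free sheaves form a strict subset, and semicontinuity makes Ulrich open. It then suffices to produce one Ulrich bundle in $\cM$ by a direct cohomological verification on a suitable deformation of $\cE_0$, from which the result follows via openness. The main obstacle is exactly this last step: exhibiting the initial $\cE_0$ and guaranteeing that, within its deformation class, the generic member is locally free and satisfies condition (ii); once this is settled, simpleness and $\det \cE = \cO_X(3H)$ follow automatically.
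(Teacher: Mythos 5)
Your framework is the right one---Serre construction, Mukai's unobstructedness of simple sheaves on a K3, and openness of the locally free and Ulrich loci---and your numerology ($c_2=5g-1$, $v^2=2g+6$, moduli dimension $2g+8$) matches the paper's. But there is a genuine gap exactly where you flag ``the main obstacle'': you never construct the initial sheaf $\cE_0$, and openness arguments cannot substitute for exhibiting a point of the moduli space where the required vanishing actually holds. Semicontinuity only propagates vanishing you already possess somewhere, and since the space of simple sheaves need not be irreducible you must produce a concrete simple sheaf in the right deformation class with $\HH^*=0$; your suggestion (a Serre extension from a $Z_0$ of $5g-1$ points on a curve in $|3H|$) leaves unexplained why such a sheaf would be simple, why its cohomology would be computable, or why its generic deformation would be locally free. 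Likewise, the claim that non-locally-free sheaves form a proper subset ``by a codimension count using $v^2>0$'' is an assertion, not an argument---it is precisely the hard step.

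The paper resolves all three issues by working one $H$-twist lower. It takes $Z$ of only $g+2$ points in general linear position (a projective frame in $\PP^g$), so that $\hh^1(\cI_Z(H))=1$ and there is a unique extension $0\to\cO_X\to\cE_Z\to\cI_Z(H)\to 0$; an explicit local computation shows $\cE_Z$ is locally free even on singular $X$, simple, ACM, self-dual ($\cE_Z\simeq\cE_Z^*(H)$), with $\hh^0(\cE_Z)=1$ and $\hh^1(\cE_Z)=\hh^2(\cE_Z)=0$. An elementary modification $\cE^\eta=\ker(\eta:\cE_Z\to\cO_\p)$ at a general point $\p$, with $\eta$ chosen so that $\HH^0(\eta)$ is an isomorphism, then kills the remaining section and gives $\HH^*(\cE^\eta)=0$: this is the sheaf your $\cE_0$ needs to be. Deforming it in the smooth $(2g+8)$-dimensional space of simple sheaves, semicontinuity preserves $\HH^*(\cF_s)=0$, and the self-duality $\cF_s^*\simeq\cF_s(-H)$ plus Serre duality yield the second Ulrich vanishing $\HH^*(\cF_s(-H))=0$ for free---so your condition (ii) on the connecting map $\delta$ never has to be arranged. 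Finally, local freeness of the generic deformation is proved by an actual dimension count: the non-locally-free sheaves arising this way are parametrized by a $\PP^1$-bundle over an open subset of $\Hilb_{g+2}(X)\times X$, of dimension $2g+7<2g+8$, hence cannot dominate the deformation space. Each of these steps is a substantive argument missing from your outline.
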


The strategy to prove the theorem is the following. First we build an ACM
vector bundle $\cE$ of rank $2$ by Serre's construction applied to a
projective coordinate system in $X$. Then we perform an elementary
modification of $\cE$ along a single generic point $\p \in X$, producing a
simple non-reflexive sheaf having the Chern character of an Ulrich
bundle. Finally we flatly deform such sheaf and check
that generically this yields the desired Ulrich bundle.

\medskip

Prior to all this, we start by observing that the trivial
bundle is a (trivial) example of ACM line bundle. Indeed,
using that $\HH^1(\cO_X)=0$ and that $C$ is connected, one checks that $\HH^1(\cO_X(-H))=0$. In turn,
this easily implies $\HH^1(\cO_X(-tH))=0$ for all $t \ge 2$. Also,
Serre duality and triviality of $\omega_X$ give
$\HH^1(\cO_X(tH))=0$ for all $t \ge 0$. This way, we see that $\cO_X$ is an ACM
line bundle on $X$. Combining this with Max Noether's theorem on the
generation of the canonical ring of curves 
(cf. \cite{rosenlicht:equivalence} for a version for Gorenstein curves)
one obtains, working as in \cite[Theorem 6.1]{saint-donat:K3}, that $X\subset \PP^g$ is an ACM surface of degree $2g-2$.

However this line bundle is never Ulrich, nor is any line bundle of
the form $\cO_X(dH)$. So
generically (for instance when $X$ has Picard number $1$) the surface $X$ will not support Ulrich line bundles.
We thus move to rank two and start by constructing a simple ACM bundle.

\begin{lem} \label{ACM1}
  Let $Z \subset X_{\sm}$ be a set of $g+2$ points in general linear
  position. Then there is a unique coherent sheaf $\cE$ of rank
  $2$ fitting into a non-splitting exact sequence:
  \begin{equation} \label{IZ}
  0 \to \cO_X \to \cE \to \cI_Z(H) \to 0.    
  \end{equation}
  The sheaf $\cE$ is locally free, simple and ACM. It satisfies:
  \[\cE\simeq
  \cE^*(H), \qquad \hh^0(\cE)=1, \qquad \hh^1(\cE)=\hh^2(\cE)=0\qquad \ext_X^1(\cE,\cE)=2g+4.
  \]
\end{lem}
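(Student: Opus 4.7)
The plan is to apply Serre's construction to the subscheme $Z$, producing $\cE$ as the essentially unique non-split extension of $\cI_Z(H)$ by $\cO_X$, and then read off all stated properties from this extension together with the ACM character of $\cO_X$ established above.

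To compute $\Ext^1(\cI_Z(H), \cO_X)$ I would apply $\Hom(-, \cO_X)$ to $0 \to \cI_Z(H) \to \cO_X(H) \to \cO_Z \to 0$. Using $H^i(\cO_X(-H)) = 0$ for $i = 0, 1$ together with $\cExt^i(\cO_Z, \cO_X) = 0$ for $i \le 1$ and $\cExt^2(\cO_Z, \cO_X) \simeq \omega_Z \otimes \omega_X^{-1} \simeq \cO_Z$ (the latter by Grothendieck duality and $\omega_X \simeq \cO_X$), this gives an identification
\[
\Ext^1(\cI_Z(H), \cO_X) \;\simeq\; \ker\bigl(H^0(\cO_Z) \to H^0(\cO_X(H))^*\bigr),
\]
dual to the evaluation map. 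General linear position of $Z$ makes evaluation injective with image of codimension one, so $\Ext^1$ is one-dimensional, determining $\cE$ up to isomorphism. Local freeness is the delicate step: writing the extension class as $\xi = (\xi_p)_{p \in Z} \in H^0(\cO_Z) \simeq \bk^{g+2}$, I would observe that if some $\xi_p$ vanished, then $\xi$ would annihilate the image of $H^0(\cO_X(H))$ in $H^0(\cO_{Z \setminus p})$; but that image is all of $H^0(\cO_{Z \setminus p})$ because the remaining $g+1$ points impose independent conditions on $|H|$, contradicting $\xi \ne 0$. Hence every $\xi_p \ne 0$, which is the Cayley-Bacharach criterion guaranteeing that $\cE$ is locally free of rank $2$. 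The isomorphism $\cE \simeq \cE^*(H)$ is then automatic from $\rk(\cE) = 2$ and $\det \cE = \cO_X(H)$.

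For cohomology I would use the defining extension and $h^0(\cI_Z(H)) = 0$ to read off $h^0(\cE) = 1$, while Riemann-Roch yields $\chi(\cE) = 2 + (g+1) - (g+2) = 1$. The Serre duality identification $\Ext^1(\cI_Z(H), \cO_X) \simeq H^1(\cI_Z(H))^*$, both one-dimensional, together with $\xi \ne 0$, forces the connecting map $H^1(\cI_Z(H)) \to H^2(\cO_X)$ in the long exact sequence to be an isomorphism, and hence $h^1(\cE) = h^2(\cE) = 0$. Twisting the extension by $tH$ and invoking the ACM property of $\cO_X$, the vanishing $h^1(\cE(tH)) = 0$ for $t \ge 1$ reduces to $h^1(\cI_Z((t+1)H)) = 0$, which follows from $Z$ imposing independent conditions on $|(t+1)H|$; the negative twists are then handled by Serre duality combined with the self-duality $\cE \simeq \cE^*(H)$.

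Simplicity follows by applying $\Hom(\cE, -)$ to the defining extension: the twisted sequence $0 \to \cO_X(-H) \to \cE(-H) \to \cI_Z \to 0$ gives $h^0(\cE(-H)) = 0$, and combined with self-duality one finds $\End(\cE) \hookrightarrow \Hom(\cE, \cI_Z(H)) \hookrightarrow H^0(\cE^*(H)) = H^0(\cE) = \bk$, so $\End(\cE) = \bk \cdot \id$. Finally, Hirzebruch-Riemann-Roch applied to $\cE^* \otimes \cE$, with $c_1(\cE^* \otimes \cE) = 0$ and $c_2(\cE^* \otimes \cE) = 4 c_2(\cE) - c_1(\cE)^2 = 4(g+2) - (2g-2) = 2g + 10$, yields $\chi(\cE^* \otimes \cE) = -2g - 2$; combining with $\ext^0(\cE, \cE) = \ext^2(\cE, \cE) = 1$ (simplicity and Serre duality) gives $\ext^1(\cE, \cE) = 2 - \chi(\cE^* \otimes \cE) = 2g + 4$. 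The main obstacle is the local freeness step, where general linear position enters crucially in forcing every local component of the extension class to be nonzero.
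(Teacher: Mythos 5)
Your proposal is correct and follows the same overall strategy as the paper: Serre's construction, the identification of $\Ext^1_X(\cI_Z(H),\cO_X)$ with the one-dimensional kernel of a map $\HH^0(\cO_Z)\to\HH^0(\cO_X(H))^*$, and local freeness from the non-vanishing of every local component of the extension class. You diverge in three places. First, your Cayley--Bacharach step is coordinate-free (if $\xi_p=0$ then $\xi$ would annihilate the image of $\HH^0(\cO_X(H))$ in $\HH^0(\cO_{Z\setminus p})$, which is everything since the remaining $g+1$ points impose independent conditions); the paper instead takes $Z$ to be a projective coordinate system and exhibits the generator $(1,\dots,1,-1)^t$ explicitly. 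Your version is cleaner and proves the same thing. Second, for the ACM property you treat $t\ge 1$ directly via $\hh^1(\cI_Z((t+1)H))=0$ and deduce $t\le -1$ from $\cE\simeq\cE^*(H)$ plus Serre duality, whereas the paper restricts to an integral hyperplane curve avoiding $Z$; your route works, but you should justify that $g+2$ points in general linear position impose independent conditions on $|kH|$ for $k\ge 2$ (e.g.\ split $Z\setminus\{p\}$ into subsets of at most $g$ points, each of which spans a linear space missing $p$, and multiply the corresponding hyperplane sections). Third, you compute $\ext^1_X(\cE,\cE)=2g+4$ by Hirzebruch--Riemann--Roch applied to $\cE^*\otimes\cE$; this is fine on a smooth K3 surface, but the paper explicitly allows singular $X$ (integral, $\omega_X\simeq\cO_X$, $\HH^1(\cO_X)=0$), where a naive appeal to HRR needs justification. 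The paper sidesteps this by computing $\ext^k_X(\cE,\cO_Z)=(2g+4)\delta_{0,k}$ and chasing the two defining sequences, which is the safer route in that generality; you should either restrict to smooth $X$ for this step or adopt the paper's elementary computation.
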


\begin{proof}
  Taking cohomology of the exact sequence
  \begin{equation}
    \label{IO}
  0 \to \cI_Z(H) \to \cO_X(H) \to \cO_Z \to 0,    
  \end{equation}
  and using the fact that $Z$ is in general linear position and hence
  contained in no hyperplane, we get $\HH^0(\cI_Z(H))=0$ and
  $\hh^1(\cI_Z(H))=1$.

  By Serre duality we get $\ext^1_X(\cI_Z(H),\cO_X)=\hh^1(\cI_Z(H))=1$
  so, up to proportionality, there is a unique non-splitting extension
  of the desired form. Correspondingly, there exists a unique coherent sheaf $\cE$ of rank two
  fitting into a non-splitting exact sequence of the form \eqref{IZ}.
  The sheaf $\cE$ we obtain this way satisfies $\hh^0(\cE)=1$ and
  $\HH^1(\cE) \simeq \Ext^1_X(\cE,\cO_X)^*=0$ because applying
  $\Hom_X(-,\cO_X)$ to \eqref{IZ} we obtain a non-zero map (and thus
  an isomorphism) 
  $\HH^0(\cO_X) \to \Ext^1_X(\cI_Z(H),\cO_X)$.

  This map is the dual of the homomorphism
  $\HH^1(\cI_Z(H)) \to \HH^2(\cO_X)$ obtained by taking global sections
  in \eqref{IZ}. So $\HH^1(\cE)=\HH^2(\cE)=0$.
  
  If $X$ is smooth we deduce that $\cE$ is locally free from the
  Cayley-Bacharach property, cf. for instance
  \cite[Theorem 5.1.1]{huybrechts-lehn:moduli}. Indeed, since $Z$ is
  in general linear position (i.e. $Z$ is a projective frame in
  $\PP^g$), no hyperplane passes through any subset of $g+1$ points of
  $Z$. Anyway the statement follows in general by a minor modification of the
  argument appearing in
  \cite[Lemma 7.2]{daniele-joan:wild}. Indeed by the local-to-global
  spectral sequence, using $\HH^1(\cO_X(-H))=0$ and
  $\cHom_X(\cI_Z(H),\cO_X) \simeq \cO_X(-H)$ we get the following exact sequence:
  \[
  0 \to \Ext_X^1(\cI_Z(H),\cO_X) \to \HH^0(\cExt_X^1(\cI_Z(H),\cO_X))
  \to \HH^2(X,\cO_X(-H)) \to 0.
  \]
  In turn, using $\cExt_X^1(\cI_Z(H),\cO_X) \simeq \omega_Z\simeq
  \cO_Z$ and $\HH^2(X,\cO_X(-H)) \simeq \HH^0(X,\cO_X(H))^*$, 
  if we choose $Z$ to be a projective coordinate
  system of $\PP^g$, we rewrite this exact sequence as:
  \[
  0 \to \Ext_X^1(\cI_Z(H),\cO_X) \to \HH^0(\cO_Z) \xrightarrow{
  \begin{pmatrix}
    1 & \cdots & 0 & 1 \\
    \vdots & \ddots & \vdots & \vdots   \\
    0 & \cdots & 1 & 1
  \end{pmatrix}} \HH^0(X,\cO_X(H))^* \to 0.
  \]
  So $\Ext_X^1(\cI_Z(H),\cO_X)$ is generated by the vector $(1,\ldots,1,-1)^t$ and
  since this vector corresponds to an extension in
  $\cExt_X^1(\cI_Z(H),\cO_X)$ which is non-zero at any point of $Z$ we have that
  the sequence defining $\cE$ is locally non-split around each point of
  $Z$, which in turn implies that $\cE$ is locally free at each such
  point (and hence everywhere). From
  $c_1(\cE)=H$, since $\cE$ is locally free of rank $2$, we get a canonical isomorphism $\cE \simeq \cE^*(H)$.

  Let us prove that $\cE$ is ACM. We already have $\hh^1(\cE)=0$ and
  thus by Serre duality
  $\hh^1(\cE(-H))=\hh^1(\cE^*(H))=\hh^1(\cE)=0$. Also
  $\hh^0(\cE(-H))=0$ and $\hh^2(\cE(-H))=1$. Note that, choosing an integral
  hyperplane section curve $C$ that avoids
  $Z$, \eqref{IZ} becomes:
  \[
  0 \to \cO_C \to \cE|_C \to \cO_C(H) \to 0.
  \]
  From $\HH^k(\cE(-H))=0$ for $k=0,1$ we deduce $\hh^0(\cE|_C)=1$ so
  the previous exact sequence does not split. Then
  $\hh^0(\cE|_C(-H))=0$. This easily implies $\HH^1(\cE(-2H))=0$ and
  actually $\HH^1(\cE(-tH))=0$ for all $t \ge 2$. Serre duality now
  gives  $\HH^1(\cE(tH))=0$ for all $t \ge 1$. In other words $\cE$ is ACM.
  
  It remains to check that $\cE$ is simple. Applying $\Hom_X(\cE,-)$ to
  the exact sequence \eqref{IO} we get that the non-zero space 
  $\Hom_X(\cE,\cI_Z(H))$ is
  contained in $\Hom_X(\cE,\cO_X(H)) \simeq \HH^0(\cE) \simeq \bk$, so 
  $\hom_X(\cE,\cI_Z(H))=1$.
  As $\cHom_X(\cE,\cO_Z)$ is a skyscraper sheaf of rank $2$ at $Z$ we 
  have $\ext^k_X(\cE,\cO_Z)=(2g+4)\delta_{0,k}$. We deduce 
  $\ext^1_X(\cE,\cI_Z(H))=2g+4$ and   $\ext^0_X(\cE,\cI_Z(H))=0$.
  
  Therefore, applying $\Hom_X(\cE,-)$ to
  the \eqref{IZ}, since $\Hom_X(\cE,\cO_X) \simeq \hh^2(\cE)=0$ we get
  that $\End_X(\cE)$ is contained in $\Hom_X(\cE,\cI_Z(H))$ and is
  therefore $1$-dimensional. This says that $\cE$ is simple. By Serre
  duality $\ext^2_X(\cE,\cE)=1$.
  We deduce $\ext^1_X(\cE,\cE)=\ext^1_X(\cE,\cI_Z(H))=2g+4$.
\end{proof}

Given a reduced subscheme $Z \in
\Hilb_{g+2}(X_\sm)$ consisting of points in general linear position, there
is a unique rank-$2$ bundle associated with $Z$ according to the
previous lemma. We denote it by $\cE_Z$. We write 
$\cO_\p$ for the skyscraper sheaf of a point $\p \in X$.

\begin{lem} \label{issimple}
  Assume $\eta : \cE_Z \to \cO_\p$ is surjective. Then $\cE^\eta =
  \ker(\eta)$ is a simple sheaf with:
  \[c_1(\cE^\eta)=H, \qquad c_2(\cE^\eta)=g+3, \qquad \ext^1_X(\cE^\eta,\cE^\eta)=2g+8.
  \]
\end{lem}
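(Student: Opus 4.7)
The defining short exact sequence
\[
0 \to \cE^\eta \to \cE_Z \to \cO_\p \to 0
\]
presents $\cE^\eta$ as a rank-$2$ torsion-free sheaf. The Chern class assertions follow immediately from additivity of the Chern character: since $\mathrm{ch}_1(\cO_\p)=0$ and $\mathrm{ch}_2(\cO_\p)=[\p]$, one obtains $c_1(\cE^\eta)=c_1(\cE_Z)=H$ and $c_2(\cE^\eta)=c_2(\cE_Z)+1=g+3$, where $c_2(\cE_Z)=g+2$ is read off from Lemma~\ref{ACM1}.

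For simplicity, the plan is to apply $\Hom_X(-,-)$ in both variables. Since $X$ is Gorenstein (as $\omega_X$ is invertible) of dimension $2$, the Bass numbers of each local ring $\cO_{X,\p}$ vanish in degrees strictly less than $2$; combined with local freeness of $\cE_Z$ at $\p$ (Lemma~\ref{ACM1}), this gives $\Ext^i_X(\cO_\p,\cE_Z)=0$ for $i=0,1$. Applying $\Hom_X(-,\cE_Z)$ to the defining sequence then yields $\Hom_X(\cE^\eta,\cE_Z)\simeq\End_X(\cE_Z)=\bk$. Applying $\Hom_X(\cE^\eta,-)$ to the same sequence exhibits $\End_X(\cE^\eta)$ as a subspace of $\Hom_X(\cE^\eta,\cE_Z)=\bk$, which forces $\End_X(\cE^\eta)=\bk$ and proves that $\cE^\eta$ is simple.

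For $\ext^1_X(\cE^\eta,\cE^\eta)$, I would compute the Euler characteristic by Hirzebruch-Riemann-Roch and then recover $\ext^1$ via Serre duality. The assumptions $\omega_X\simeq\cO_X$ and $\hh^1(\cO_X)=0$ give $\chi(\cO_X)=2$ and $c_1(T_X)=0$, so expanding $\mathrm{ch}(\cF^\vee)\cdot\mathrm{ch}(\cF)\cdot\mathrm{td}(X)$ for any rank-$2$ coherent sheaf $\cF$ on $X$ collapses to
\[
\chi(\cF,\cF)=2\,\rk(\cF)^2+c_1(\cF)^2-4c_2(\cF).
\]
Substituting $c_1(\cE^\eta)^2=H^2=2g-2$ and $c_2(\cE^\eta)=g+3$ produces $\chi(\cE^\eta,\cE^\eta)=-2g-6$. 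Gorenstein Serre duality then gives $\ext^2_X(\cE^\eta,\cE^\eta)=\hom_X(\cE^\eta,\cE^\eta)=1$, hence $\ext^1_X(\cE^\eta,\cE^\eta)=1+1+(2g+6)=2g+8$.

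The only potential obstacle lies in the simplicity step: one must justify $\Ext^1_X(\cO_\p,\cE_Z)=0$ without assuming $\p$ is a smooth point. Gorensteinness of $X$ is exactly the hypothesis that saves the argument, as it guarantees that the Bass numbers of $\cO_{X,\p}$ behave as on a regular local ring of dimension $2$. Once this is in place, the remainder of the proof is a formal manipulation of long exact sequences combined with Riemann-Roch.
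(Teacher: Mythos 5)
Your proof is correct and all the numbers check out, but it takes a genuinely different route from the paper's in two places. For simplicity, the paper first shows $\Hom_X(\cE_Z,\cE^\eta)=0$ (a nonzero map composed with $\cE^\eta\hookrightarrow\cE_Z$ would contradict simplicity of $\cE_Z$), computes $\ext^\bullet_X(\cE_Z,\cE^\eta)$ and $\ext^\bullet_X(\cO_\p,\cE^\eta)$ from the defining sequence, and then applies $\Hom_X(-,\cE^\eta)$ to embed $\End_X(\cE^\eta)$ into $\Ext^1_X(\cO_\p,\cE^\eta)\simeq\bk$; that single chain of long exact sequences delivers simplicity and $\ext^1_X(\cE^\eta,\cE^\eta)=2g+8$ simultaneously. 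Your variant --- embedding $\End_X(\cE^\eta)$ into $\Hom_X(\cE^\eta,\cE_Z)\simeq\End_X(\cE_Z)=\bk$ via the vanishing $\Ext^{i}_X(\cO_\p,\cE_Z)=0$ for $i\le 1$ --- is shorter, and the Bass-number observation even makes it work for a non-smooth point $\p$ (in the paper $\p$ always lies in $X_\sm$, and its computation $\ext^k_X(\cO_\p,\cO_\p)=\binom{2}{k}$ silently uses this). The one caveat is your use of Hirzebruch--Riemann--Roch for $\chi(\cE^\eta,\cE^\eta)$: the paper explicitly allows $X$ to be a singular integral surface with $\omega_X\simeq\cO_X$, where the formula $\chi(\cF,\cF)=2\rk(\cF)^2+c_1(\cF)^2-4c_2(\cF)$ is not available off the shelf. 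You can sidestep this entirely by computing the Euler characteristic by bilinearity over the defining sequence,
\[
\chi(\cE^\eta,\cE^\eta)=\chi(\cE_Z,\cE_Z)-\chi(\cO_\p,\cE_Z)-\chi(\cE_Z,\cO_\p)+\chi(\cO_\p,\cO_\p)=(-2g-2)-2-2+0=-2g-6,
\]
using only the numbers already established in Lemma~\ref{ACM1} together with local freeness of $\cE_Z$ and Serre duality; this is in effect what the paper's chain of long exact sequences does.
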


\begin{proof}
  Recall that $\cE=\cE_Z$ is simple and observe that this implies
  $\Hom_X(\cE,\cE^\eta)=0$, as the composition of any non-zero map $\cE
  \to \cE^\eta$ with $\cE^\eta \mono \cE$ would provide a self-map of
  $\cE$ which is not a multiple of the identity. Also, since $\cE$ is
  locally free we have $\hom_X(\cE,\cO_\p) = 2$ and
  $\Ext^k_X(\cE,\cO_\p)=0$ for $k > 0$. Therefore, using Lemma \ref{ACM1} and
  applying $\Hom_X(\cE,-)$ to the exact sequence:
\begin{equation}
  \label{Eeta}
  0 \to \cE^\eta \to \cE \to \cO_\p \to 0.  
\end{equation}
  we obtain $\ext^1_X(\cE,\cE^\eta)=2g+5$ and $\ext^2_X(\cE,\cE^\eta)=1$. 

  Next,
  Serre duality gives $\ext^k_X(\cO_\p,\cE)=2\delta_{2,k}$, while
  $\ext^k_X(\cO_\p,\cO_\p)$ is the dimension of the $k$-th exterior
  power of the normal bundle of $\p$ in $X$ and thus takes value
  $2\choose k$. Therefore, applying $\Hom_X(\cO_\p,-)$ to \eqref{Eeta}
  we find
  $\ext^1_X(\cO_\p,\cE^\eta)=1$ and $\ext^2_X(\cO_\p,\cE^\eta)=3$.
  Putting these computations together and applying
  $\Hom_X(-,\cE^\eta)$ again to \eqref{Eeta} we get:
  \[
  \hom_X(\cE^\eta,\cE^\eta) = \ext^2_X(\cE^\eta,\cE^\eta)=1, \qquad \ext^1_X(\cE^\eta,\cE^\eta)=2g+8.
  \]
  The computation of Chern classes is straightforward.
\end{proof}

\begin{lem} \label{surietta}
  Let $\p \in X_\sm \setminus Z$. Then, for a generic map $\eta : \cE_Z
  \to \cO_\p$, the induced map on global sections $\HH^0(\eta) : \HH^0(\cE_Z) \to \HH^0(\cO_\p)$ is an isomorphism.
\end{lem}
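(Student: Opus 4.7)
The plan is to reduce the statement to a linear-algebra fact on the fiber $\cE_Z|_\p$. By Lemma~\ref{ACM1} we have $\hh^0(\cE_Z)=1$, and of course $\hh^0(\cO_\p)=1$, so $\HH^0(\eta)$ is merely a scalar map, and it suffices to show that this scalar is non-zero for generic $\eta$.

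First I would fix a generator $s$ of $\HH^0(\cE_Z)$, namely the image of $1 \in \HH^0(\cO_X)$ under the inclusion $\cO_X \hookrightarrow \cE_Z$ coming from \eqref{IZ}. Since $\p \notin Z$, the sheaf $\cI_Z(H)$ is locally free of rank one in a neighborhood of $\p$, so \eqref{IZ} splits locally at $\p$; in particular the evaluation $s(\p)$ is a non-zero element of the two-dimensional fiber $\cE_Z|_\p$. This non-vanishing is the only geometric input of the argument and the only step that genuinely uses the hypothesis $\p \in X_\sm \setminus Z$.

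Next, since $\cE_Z$ is locally free of rank $2$ at the smooth point $\p$, there is a natural identification $\Hom_X(\cE_Z, \cO_\p) \simeq (\cE_Z|_\p)^* \simeq \bk^{\oplus 2}$, under which each $\eta$ corresponds to a linear functional $\bar\eta$ on the fiber. Unwinding the definitions, the induced map $\HH^0(\eta)$ sends $s$ to $\bar\eta(s(\p))$. The condition $\bar\eta(s(\p))=0$ cuts out a proper linear subspace of $(\cE_Z|_\p)^*$, proper precisely because $s(\p)\neq 0$, so its complement is a non-empty Zariski open subset of $\Hom_X(\cE_Z,\cO_\p)$. For $\eta$ in this open locus $\HH^0(\eta)$ is non-zero, hence an isomorphism between one-dimensional vector spaces. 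No serious obstacle arises; beyond the non-vanishing $s(\p)\neq 0$ recorded above, the argument is pure linear algebra on the fiber over $\p$.
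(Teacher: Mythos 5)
Your proof is correct, and it takes a genuinely different and more elementary route than the paper. You reduce everything to linear algebra on the fiber: since the cokernel of $s : \cO_X \to \cE_Z$ is $\cI_Z(H)$, which is invertible near $\p \notin Z$, the sequence \eqref{IZ} splits locally at $\p$ and hence $s(\p) \ne 0$ in $\cE_Z|_\p$; then $\Hom_X(\cE_Z,\cO_\p) \simeq (\cE_Z|_\p)^*$ and $\HH^0(\eta)$ is evaluation of $\bar\eta$ at $s(\p)$, so the bad locus is the proper hyperplane $s(\p)^{\perp}$. The paper instead produces a single good $\eta$ cohomologically: it shows $\Ext^1_X(\cI_Z(H),\cO_\p)=0$ via the local-to-global spectral sequence (using $\p \cap Z = \emptyset$), so the extension class of \eqref{IZ} lifts along $\cI_\p \to \cO_X$, yielding a diagram in which $\cE^\eta$ sits in $0 \to \cI_\p \to \cE^\eta \to \cI_Z(H) \to 0$ and therefore has $\HH^0(\cE^\eta)=0$; openness of the condition then gives genericity. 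Your argument is shorter and makes the role of the hypothesis $\p \in X_\sm \setminus Z$ completely transparent (it is exactly what guarantees $s(\p) \ne 0$ and the identification of $\Hom_X(\cE_Z,\cO_\p)$ with the dual fiber). What the paper's construction buys in exchange is the explicit presentation of $\cE^\eta$ as an extension of $\cI_Z(H)$ by $\cI_\p$, which exhibits the elementary modification as the Serre construction with one extra point and is conceptually useful for the deformation step that follows, even though the later lemmas only formally invoke the statement you proved.
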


\begin{proof} Put $\cE=\cE_Z$.
  It suffices to check
  that there exists $\eta$ such that the induced map $\HH^0(\eta) : \bk \simeq \HH^0(\cE)
  \to \HH^0(\cO_\p) \simeq \bk$ is an isomorphism, for this is an open
  condition. To do it, we apply $\Hom_X(\cI_Z(H),-)$ to the
  exact sequence:
  \[
  0 \to \cI_\p \to \cO_X \to \cO_\p \to 0.
  \]
  This gives an exact sequence:
  \[
    \Ext^1_X(\cI_Z(H),\cI_\p) \to \Ext^1_X(\cI_Z(H),\cO_X) \to
    \Ext^1_X(\cI_Z(H),\cO_\p).
  \]
  
  Observe that $\cHom_X(\cI_Z(H),\cO_\p)\simeq \cO_\p$ and
  $\cExt^1_X(\cI_Z(H),\cO_\p) = 0$ as these sheaves are
  computed locally on $X$ and, since $\p \cap
  Z = \emptyset$, we may choose an open cover of $X$ consisting of
  subsets where $\cI_Z$ is
  trivial or $\cO_\p$ vanishes. Then the local-to-global spectral sequence
  gives $\Ext^1_X(\cI_Z(H),\cO_\p)=0$ so the extension corresponding
  to \eqref{IZ} admits a lifting to $\cI_\p$. In other words, we get the commutative exact
  diagram:
  \[
  \xymatrix@-2ex{
    & 0 \ar[d] & 0 \ar[d] \\
    0 \ar[r] & \cI_\p \ar[d] \ar[r] & \cE^\eta \ar[r] \ar[d] & \cI_Z(H) \ar[r] \ar@{=}[d] & 0\\
    0 \ar[r] & \cO_X \ar[r] \ar[d] & \cE \ar[r] \ar^-\eta[d] & \cI_Z(H) \ar[r] & 0 \\
    & \cO_\p \ar@{=}[r] \ar[d] & \cO_\p \ar[d]\\ 
    & 0 & 0 \\ 
    }
  \]
  where  $\eta$ and $\cE^\eta$ are defined by the diagram.
  For this choice of $\eta$ we get, by the top row of the diagram,
  $\HH^0(\cE^\eta)=0$, which implies that $\HH^0(\eta)$ is an isomorphism.
\end{proof}

By the previous lemma, we may  choose $\cE_Z$ as in Lemma
\ref{ACM1}, a point $\p \in X_\sm \setminus Z$, some $\eta
: \cE_Z \epi \cO_\p$ and consider the sheaf $\cE^\eta$. The goal is
to deform $\cE^\eta(H)$ to an Ulrich bundle.
We use the notation  $\cF_s^*$ for $(\cF_s)^*$ (which is a
priori not the same as $(\cF^*)_s$). 

\begin{lem} \label{deform}
  There exist a smooth connected variety $S_0$ of dimension $2g+8$ and a flat family of simple
  sheaves $\cF$ on $X \times S_0$
  such that
   $\cF_s(H)$ is an Ulrich bundle for $s$ generic in $S_0$ and $\cF_{s_0}
   \simeq \cE^\eta$ for some distinguished point $s_0$ of $S_0$.
\end{lem}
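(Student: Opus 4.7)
The plan is to deform $\cE^\eta$ inside the moduli of simple sheaves on $X$ and show that the Ulrich condition carves out a non-empty Zariski open subset of a smooth connected neighborhood of $[\cE^\eta]$.

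First, I would invoke the unobstructedness of simple sheaves on $X$: in the classical K3 case this is Mukai's theorem, and in the singular Gorenstein setting of the paper it follows from the fact that, for a simple sheaf, the trace map $\Ext^2_X(\cE^\eta,\cE^\eta)\to\HH^2(\cO_X)$ is an isomorphism (both sides are $\bk$ by Serre duality and triviality of $\omega_X$), so the traceless obstruction space vanishes. This produces a smooth connected variety $S_0$ of dimension $\ext^1_X(\cE^\eta,\cE^\eta)=2g+8$, étale over an open neighborhood of $[\cE^\eta]$ in the moduli of simple sheaves, carrying a flat family $\cF$ on $X\times S_0$ with $\cF_{s_0}\simeq\cE^\eta$ at a distinguished point $s_0$.

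The Ulrich property of $\cF_s(H)$ amounts to three open conditions on $S_0$: (a) $\cF_s$ locally free; (b) $\HH^*(\cF_s)=0$; (c) $\HH^*(\cF_s(-H))=0$. Condition (b) already holds at $s_0$: combining \eqref{Eeta} with Lemma \ref{surietta} forces $\HH^0(\eta)$ to be an isomorphism, and then the long exact sequence gives $\HH^*(\cE^\eta)=0$. For (a), I would bound the non-locally-free stratum of $S_0$ by stratifying according to the length of the torsion cokernel $\cF_s^{**}/\cF_s$ and checking that each stratum has dimension strictly less than $2g+8$. Since $S_0$ is irreducible, it suffices to verify (a), (b), (c) on non-empty open subsets individually.

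The heart of the argument is (c). At $s_0$, tensoring \eqref{Eeta} with $\cO_X(-H)$ yields $\chi(\cE^\eta(-H))=0$ and $(\hh^0,\hh^1,\hh^2)(\cE^\eta(-H))=(0,1,1)$. Since $\hh^0$ is upper-semicontinuous and $\chi$ is constant, it suffices to force the drop $\hh^2(\cF_s(-H))=0$ on a non-empty open subset; then $\hh^1=0$ follows automatically. By Serre duality $\hh^2(\cF_s(-H))=\hom_X(\cF_s,\cO_X(H))$, and the closed locus $W\subset S_0$ where this number is positive has Zariski tangent space at $s_0$ sitting inside the kernel of the composition map $\Phi:\Ext^1_X(\cE^\eta,\cE^\eta)\to\Ext^1_X(\cE^\eta,\cO_X(H))$ induced by the unique nonzero morphism $\phi:\cE^\eta\to\cO_X(H)$. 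A short calculation using \eqref{Eeta} shows $\Ext^1_X(\cE^\eta,\cO_X(H))\simeq\bk$.

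The main obstacle is to prove $\Phi\neq 0$. I would do this by producing an explicit first-order deformation of $\cE^\eta$ whose image under $\Phi$ is a generator of $\Ext^1_X(\cE^\eta,\cO_X(H))$, using the factorization $\phi:\cE^\eta\epi\cI_Z(H)\mono\cO_X(H)$ from Lemma \ref{surietta} and interpreting $\Phi$ as the obstruction to lifting $\phi$ along the deformation. Once $\Phi\neq 0$, we get $\dim T_{s_0}W\le 2g+7$, so $W\subsetneq S_0$, and the complement intersected with the open neighborhoods from (a) and (b) is a non-empty open subset on which $\cF_s(H)$ is the desired Ulrich bundle.
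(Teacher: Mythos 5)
Your setup — deforming $\cE^\eta$ in the smooth $(2g+8)$-dimensional moduli of simple sheaves and cutting out the Ulrich locus as an intersection of open conditions — is exactly the paper's strategy, and your treatment of conditions (a) and (b) (local freeness via a dimension count on the torsion strata, $\HH^*(\cF_s)=0$ by semicontinuity from $s_0$) matches the paper's argument in outline. The problem is condition (c). You correctly reduce it to showing $\hh^2(\cF_s(-H))=\hom_X(\cF_s,\cO_X(H))$ drops to zero generically, and you set up a Brill--Noether-type tangent space argument for the jump locus $W$, but the decisive step — that the composition map $\Phi:\Ext^1_X(\cE^\eta,\cE^\eta)\to\Ext^1_X(\cE^\eta,\cO_X(H))$ is non-zero — is exactly where the proof would have to happen, and you only describe what you \emph{would} do ("producing an explicit first-order deformation\dots"). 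That is a genuine gap: with a one-dimensional target, $\Phi=0$ is a real possibility a priori, and ruling it out requires an actual computation (e.g.\ via the Serre-duality adjoint $\Ext^1_X(\cO_X(H),\cE^\eta)\to\Ext^1_X(\cE^\eta,\cE^\eta)$ and non-degeneracy of the trace pairing); the containment $T_{s_0}W\subseteq\ker\Phi$ for the cohomology jump locus also deserves justification since $\cE^\eta$ is not locally free.

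The paper avoids this entire difficulty with a short self-duality trick that your proposal misses: once $\cF_s$ is locally free of rank $2$, its determinant is $\cO_X(3H-2H)=\cO_X(H)$ (constant in the family because $\HH^1(\cO_X)=0$ makes $\Pic(X)$ discrete), so $\cF_s^*\simeq\cF_s(-H)$ canonically. Serre duality with $\omega_X\simeq\cO_X$ then gives $\HH^i(\cF_s(-H))\simeq\HH^{2-i}(\cF_s^*(H))^*\simeq\HH^{2-i}(\cF_s)^*$, which vanishes by condition (b). In other words, for this particular Chern class, (c) is a formal consequence of (a) and (b), and no tangent-space analysis of $W$ is needed. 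I would recommend replacing your treatment of (c) by this observation; alternatively, if you want to keep your route, you must actually prove $\Phi\neq 0$, not merely announce a plan for it.
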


\begin{proof}
We proved in Lemma \ref{issimple} that $\cE^\eta$ is simple.  
Since the non-locally free locus of $\cE^\eta$ is disjoint from the
singular locus of $X$, we may apply the arguments of \cite[Theorem
0.1]{mukai:symplectic}.
In particular (cf. \cite{altman-kleiman:compactifying}) the moduli functor of simple
sheaves on $X$ is pro-represented by a moduli space $\Spl_X$ which can be
constructed in the étale topology and which is smooth of
dimension $2g+8$ at $\cE^\eta$ (this is essentially \cite[Theorem
0.3]{mukai:symplectic}).
Therefore there exists an open piece of $\Spl_X$ which is a quasi-projective variety $S$ equipped with a flat family $\cF$ of simple
sheaves on $X$, such that the 
induced map $S \to \Spl_X$ is a local isomorphism 
around the point corresponding to $\cE^\eta$. We denote by $s_0$ this
point, so that  $\cF_{s_0} \simeq
\cE^\eta$.

We
may assume that $S$ is smooth and connected of dimension $2g+8$. 
Since the reflexive hull $\cE$ of $\cE^\eta$ is locally free and satisfies
the assumption of \cite[Corollary 1.5]{artamkin:torsion-free}, we get
that $\cF_s$ is locally free for all $s$ in an open dense subset $S_1$
of $S$. 

Now observe that $\HH^*(\cF_{s_0})=0$ by Lemmas \ref{ACM1} and
\ref{surietta}. Then, semicontinuity ensures that $\HH^*(\cF_s)=0$ for
all $s$ in an open dense
subset $S_0$ of $S_1$. Therefore, the isomorphism $\cF_s^* \simeq
\cF_s(-H)$ and Serre duality give $\HH^i(\cF_s(-H)) \simeq
\HH^{2-i}(\cF_s^*(H))^* \simeq \HH^{2-i}(\cF_s)^*=0$.
This says that $\cF_s(H)$ is a special Ulrich bundle, for all $s \in S_0$.
\end{proof}

For the reader's benefit we also provide a 
proof of Lemma \ref{deform} independent of
\cite{artamkin:torsion-free}. 
The point is to check 
that $\cF_s$ is locally free for all $s$ in an open dense subset of $S$. 
To do this, first recall again that the non-locally free locus of $\cE^\eta$
is disjoint from the singular locus of $X$, so up to shrinking $S$ we may assume that this happens for
$\cF_s$ for all $s \in S$. Then $\cF_s^{**}$ is locally free for $s \in S$. 

Next, we may find an integer
$t_0 \le -1$ such that $\HH^0(\cF_s^{**}(t_0H))=\HH^1(\cF_s^{**}(t_0H))=0$ for
all $s \in S$. This can be done for instance using Kollar's theory
of husks (cf. \cite{kollar:hulls-husks}), which gives a stratification
$(S_i)_{i=1,\ldots,r}$ of $S$ such
that $\cF_s^{**}$ 
defines a flat family of sheaves on $X$ parametrized by $S_i$. Using
base change over each each $S_i$ one finds $t_i$ satisfying the required
vanishing together with $\HH^0(\cF_s^{**}(t_iH)|_C)=0$, for a fixed
curve $C \in |\cO_X(H)|$. Then $t_0$ can be taken to be the minimum
among $t_1,\ldots,t_r$.

Recall that $\HH^*(\cF_{s_0})=0$ and observe that \eqref{Eeta} gives:
\[
\hh^1(\cF_{s_0}(tH))=\left\{
  \begin{array}[h]{ll}
    1 & \mbox{if $t \le -1$}, \\
    0 & \mbox{if $t \ge 0$}.
  \end{array}
\right.
\]
By semi-continuity, we have thus $\HH^*(\cF_s)=0$, $\hh^1(\cF_s(tH))=0$ for all $t \ge 0$
and $\hh^1(\cF_s(tH))\le 1$ for $t \le -1$ for all $s$ in an open
dense subset of $S$. We still call $S$ this subset. 

Next, for all $s \in S$ we consider the double dual sequence:
\begin{equation}
  \label{tau}
0 \to \cF_s \to \cF_s^{**} \to \tau(\cF_s) \to 0.
\end{equation}
where the torsion sheaf $\tau(\cF_P)$ is defined by the sequence. Put
$\ell_s$ for the length of $\tau(\cF_s)$. 

Since $\HH^0(\cF_s^{**}(t_0H))=\HH^1(\cF_s^{**}(t_0H))=0$, from the
previous exact sequence we get 
$\ell_s=\hh^0(\tau(\cF_s))=\hh^1(\cF_s(t_0H)) \le 1$ (we neglect to
indicate the
twist on zero-dimensional sheaves).

Now we have two alternatives. Namely, either for $s$ general enough in $S$
one has $\ell_s =
0$, i.e. $\tau(\cF_s)=0$; or otherwise for all $s \in S$ we get $\ell_s=1$, i.e. $\tau(\cF_s) \simeq
\cO_{\p_s}$, for some
point $\p_s \in X$ with $\p_{s_0}=\p$. 

In the first case, we have
$\cF_s \simeq \cF_s^{**}$ and $\cF_s$ is locally free. So we would like to rule out the second alternative. By contradiction we assume that, for
all $s \in S$, we have $\tau(\cF_s) \simeq \cO_{\p_s}$. This gives a map $\gamma : S \to X$ associating
$\p_s$ to $s$. This time $\cF^{**}$
is flat over $S$ and  \eqref{tau} is the
restriction to $X \times \{s\}$ of a sequence on $X \times S$:
\[
0 \to \cF \to \cF^{**} \to \tau(\cF) \to 0,
\]
with $(\cF_s)^{**} \simeq (\cF^{**})_s$ and where $\tau(\cF)$ is 
a line bundle supported on the graph of $\gamma$.

Also, again the previous exact sequence together with $\HH^*(\cF_s)=0$
gives $\hh^0(\cF^{**}_s)=1$ so that
$\cF_s^{**}$ has a unique non-zero global section up to a scalar. This section
vanishes along a subscheme $Z_s \subset X$ and, up to shrinking again
$S$ we may 
assume that $Z_s$ is zero-dimensional reduced and in general linear
position, because 
these are open conditions, so that
$\cF^{**}_s \simeq \cE_{Z_s}$.

For each sheaf $\cF^{**}_s$ of
this family, we denote by $\eta_s : \cF^{**}_s \epi \cO_{\p_s}$ the induced
surjection of $\cF_s^{**}$ onto $\tau(\cF_s)$. We think of $\eta_s$ as an element of
$\PP(\HH^0(\cF^{**}_s|_{\p_s})) \simeq \PP^1$ (we adopt the convention
of writing $\PP(V)$ for
the projective space of hyperplanes of a vector space $V$). Plainly, we have $\cF^{**}_{s_0}
\simeq \cE^\eta$, $\tau(\cF_{s_0}) \simeq \cO_{\p}$ and $\eta_{s_0}$ is
identified with $\eta$. Note that $\cF_s = \ker(\eta_s)$.

We assert that the family $\cF$
is parametrized by an open subset $T$ of the set of triples:
\[
\{
(W,q,\xi) \mid W \in \Hilb_{g+2}(X), \, q \in X, \, \xi \in \PP(\HH^0(\cE_W|_{q}))
\}.
\]

The subset $T$ consists of $(W,q,\xi)$ with $W
\subset X_\sm$ is reduced and in 
general linear position in $X$, $q \in X_\sm \setminus W$ and $\xi$
is surjective. Given such a triple, we get the sheaf $\ker(\xi)$ which is
simple by Lemma \ref{issimple}.
Clearly this gives a flat deformation of $\cE^\eta$ so, because $S
\to \Spl_X$ is a local
isomorphism at $\cE^\eta$, there is a
possibly smaller open subset $T_0$ such that all the resulting sheaves
$\ker(\xi)$ 
are of the form $\cF_s$, for some $s \in S$. By construction any
sheaf $\cF_s$ should be of this form by taking $q=\p_s$, $W=Z_s$ and $\xi=\eta_s$.

But $T_0$ is an open dense subset of a $\PP^1$-bundle over an open
subset of $\Hilb_{g+2}(X) \times X$ and thus has dimension
$1+2(g+2)+2=2g+7$. Therefore $T_0$ cannot dominate $S$, as $\dim(S)=2g+8$.
This says that the second alternative does not take place, so we have
proved that $\cF_s(H)$ is an Ulrich bundle for general $s$.

\bigskip

Recall the notation $\rM_X(v)$ for the moduli space of $H$-semistable
sheaves $\cF$ on $X$ whose Mukai vector $v=(v_0,v_1,v_2)$ satisfies
$v_0=\rk(\cF)$,  $v_1=c_1(\cF)$ and $v_2=\chi(\cF)-\rk(\cF)$.
From \cite[Lemma 2.1]{qin:simple} we obtain the following
stronger version of Theorem \ref{main}.

\begin{corol} \label{stable-rank-2}
  If $X$ is smooth, $\rM_X(2,H,-2)$ is of dimension $2g+8$ and a general point
  of it corresponds to a sheaf $\cE$ which is stable (with respect to
  all polarizations) and such that $\cE(H)$ is a
  special Ulrich bundle.
\end{corol}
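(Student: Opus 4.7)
The plan is to identify the family $\cF \to S_0$ constructed in Lemma~\ref{deform} with an open dense subset of a component of $\rM_X(2,H,-2)$, and to apply Qin's criterion to promote simplicity to stability with respect to every polarization.

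First I would compute the Mukai vector of a generic $\cF_s$. From Lemmas~\ref{ACM1} and \ref{issimple} one has $\rk(\cF_s)=2$, $c_1(\cF_s)=H$, and $c_2(\cF_s)=g+3$. Riemann--Roch on the K3 surface $X$ (so $\chi(\cO_X)=2$ and $H^2=2g-2$) gives
\[
\chi(\cF_s) \;=\; 2\chi(\cO_X) + \tfrac{1}{2} H^2 - c_2(\cF_s) \;=\; 4 + (g-1) - (g+3) \;=\; 0,
\]
so the Mukai vector is $v=(2,H,-2)$ with $\langle v,v\rangle = H^2 + 8 = 2g+6$. Mukai's theorem then says that the moduli space of simple sheaves on $X$ of Mukai vector $v$ is smooth of dimension $\langle v,v\rangle+2=2g+8$ at every such $\cF_s$; this already yields the dimension statement of the corollary.

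To turn simplicity into stability I would invoke \cite[Lemma~2.1]{qin:simple}: for a simple rank-$2$ sheaf with $c_1=H$ satisfying the numerical hypothesis of that lemma, simplicity forces $\mu$-stability with respect to every polarization of $X$. Consequently each such $\cF_s$ defines a stable point of the Gieseker moduli space $\rM_X(2,H,-2)$, irrespective of the polarization used to set up Gieseker stability.

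Finally, the classifying morphism $S_0 \to \rM_X(2,H,-2)$ is a local isomorphism at each generic $s$, by the construction of $S$ in Lemma~\ref{deform} as an étale local model of $\Spl_X$ together with smoothness of $\rM_X(2,H,-2)$ at stable points. Since $\dim S_0 = 2g+8 = \dim \rM_X(2,H,-2)$, the family dominates the component of $\rM_X(2,H,-2)$ containing $\cF_{s_0}$, and openness of the Ulrich condition (via semicontinuity of cohomology) forces a general member of that component to be a stable sheaf $\cE$ with $\cE(H)$ a special Ulrich bundle. The delicate step is verifying the numerical hypothesis of Qin's Lemma~2.1 for the vector $(2,H,-2)$; once this is settled, the dimension count and the openness argument are essentially automatic.
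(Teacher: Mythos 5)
Your proposal matches the paper's own (primary) argument: the corollary is deduced there directly from Lemma \ref{deform} together with \cite[Lemma 2.1]{qin:simple}, exactly as you do, and your Mukai-vector computation $v=(2,H,-2)$, $\langle v,v\rangle+2=2g+8$ just makes explicit the dimension already recorded as $\ext^1_X(\cE^\eta,\cE^\eta)=2g+8$ in Lemma \ref{issimple}. (The paper also sketches a second, self-contained route avoiding Qin's lemma, by showing that a strictly semistable $\cF_s$ would be an extension of twisted Ulrich line bundles, a family of dimension at most $g+6<2g+8$; but the approach you take is the one the paper actually uses.)
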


Again, we also offer a proof independent of \cite{qin:simple,artamkin:torsion-free}.
Consider the family of Ulrich sheaves $\cF(H)$ with parameter space $S_0$
constructed in the previous lemma. 
Recall that, for generic $s \in S_0$, the sheaf $\cF_s(H)$ is Ulrich,
hence semistable with Ulrich
sheaves as Jordan-Hölder factors (cf. \cite[Lemma 7.1]{daniele-joan:wild}). 
So we have to check that $\cF_s$ is not strictly
semistable. If it was, we would have an exact sequence:
\begin{equation}
  \label{extension}
0 \to \cL \to \cF_s \to \cL^*(H) \to 0,  
\end{equation}
where $\cL(H)$ is an Ulrich sheaf or rank $1$ on $X$. Actually $\cL(H)$ is
an Ulrich line bundle since $X$ is smooth. Since $\cL$ and $\cL^*(H)$
are rigid in view of $\HH^1(\cO_X)=0$, they do not depend on $s$,
which justifies the notation.
Since $\cL(H)$ is an Ulrich line bundle we have
$\chi(\cL)=\chi(\cL(-H))=0$ which gives $L^2=-4$ and $LH=g-1$, where
$L=c_1(\cL)$. Similar constraints hold for $H-L$.
In particular, $L$ and $H-L$ have the same
degree with respect to $H$, hence $\hh^0(\cO_X(2L-H)) \le 1$, with equality being
attained if and only if $L \equiv H-L$. Likewise, $\hh^2(\cO_X(2L-H))
= \hh^0(\cO_X(H-2L)) \le 1$.
Now we observe the following bound:
\begin{align*}
\ext_X^1(\cL^*(H),\cL)& =\hh^1(\cO_X(2L-H)) = \\
& = \hh^0(\cO_X(2L-H))+\hh^2(\cO_X(2L-H))-\chi(\cO_X(2L-H)) \le \\
& \le 2-\chi(\cO_X(2L-H)) = g+7,
\end{align*}
the last equation being obtained by Riemann-Roch after plugging
$L^2=-4$ and $HL=g-1$. In view of the rigidity of $H-L$ and $L$, the family of
sheaves appearing as an extension \eqref{extension} is parametrized by
$\PP(\Ext^1_X(\cL^*(H),\cL))$ and hence has dimension at most $g+6$. So this family
cannot dominate the $(2g+8)$-dimensional family $S_0$, a
contradiction.

\bigskip

It  follows from Theorem \ref{main} that $X$ is strictly Ulrich wild
in the sense of \cite{daniele-joan:wild}.
 The next result refines this fact in terms of moduli spaces. It was proved when $\Pic(X)$ is
generated by $H$ in \cite[Theorem 2.7]{aprodu-ortega-farkas:crelle}. A
modification of that argument allows to prove the result in general.

\begin{thm}
  Let $X$ be a K3 surface and $H$ be a very ample line bundle on
  $X$. Then, for any positive integer $r$, the moduli space
  $\rM_X(2r,r H,-2r)$ is of dimension $2(r^2(g+3)+1)$. Given a general sheaf
  $\cF$ in this space, $\cF(H)$ is a stable Ulrich bundle.
\end{thm}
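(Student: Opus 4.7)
The plan is to proceed by induction on $r$, with the base case $r=1$ being Corollary \ref{stable-rank-2}. For the inductive step, set $v_s = (2s, sH, -2s)$ and choose, using Corollary \ref{stable-rank-2} and the inductive hypothesis, general non-isomorphic stable sheaves $\cF_1 \in \rM_X(v_1)$ and $\cF_{r-1} \in \rM_X(v_{r-1})$ such that $\cF_1(H)$ and $\cF_{r-1}(H)$ are special Ulrich bundles. Being stable Ulrich bundles of the same reduced Hilbert polynomial but non-isomorphic, Gieseker stability forces $\Hom_X(\cF_i,\cF_j)=0$ for $i\ne j$, and Serre duality on $X$ gives $\Ext^2_X(\cF_i,\cF_j)=0$ as well. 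The Mukai pairing then yields $\ext_X^1(\cF_1,\cF_{r-1}) = 2(r-1)(g+3) > 0$, so I may choose a non-split extension
\[
0 \to \cF_{r-1} \to \cF \to \cF_1 \to 0,
\]
producing a sheaf $\cF$ of Mukai vector $v_r=(2r,rH,-2r)$ whose twist $\cF(H)$, as an extension of Ulrich bundles, is itself Ulrich.

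A diagram chase, applying $\Hom_X(-,\cF)$ and $\Hom_X(\cF,-)$ to the defining sequence and using the above vanishings together with the simplicity of $\cF_1$ and $\cF_{r-1}$, shows that $\End_X(\cF)=\bk$, i.e.\ $\cF$ is simple. By Mukai's theorem \cite{mukai:symplectic}, $\cF$ then lies on a smooth component of $\Spl_X$ of dimension $v_r^2+2=2(r^2(g+3)+1)$, and just as in Lemma \ref{deform}, the Ulrich property is open in flat families of simple sheaves; a general deformation of $\cF$ is therefore a simple sheaf whose twist is a rank-$2r$ Ulrich bundle with $c_1 = 3rH$.

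It remains to show that a general such deformation is stable. A strictly semistable simple sheaf with Mukai vector $v_r$ admits a non-trivial filtration whose graded pieces $\cE_i$ are stable of Mukai vector $v_{r_i}$ for some partition $\sum_i r_i=r$ with $k \ge 2$ parts. By induction $\dim \rM_X(v_{r_i}) = 2(r_i^2(g+3)+1)$, and the same Hom- and $\Ext^2$-vanishings combined with the Mukai pairing give $\ext_X^1(\cE_i,\cE_j) = 2 r_i r_j(g+3)$ for $i \ne j$. Parametrizing the filtered sheaves by the choice of factors $\cE_i$ and successive extensions, the locus with JH type $(r_1,\ldots,r_k)$ has dimension at most
\[
(g+3)\bigl(r^2 + \textstyle\sum_i r_i^2\bigr) + 2k,
\]
which is strictly less than the expected dimension $2(g+3) r^2 + 2$ whenever $k \ge 2$, since then $r^2 - \sum_i r_i^2 = 2\sum_{i<j} r_i r_j \ge k(k-1)$. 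Hence the strictly semistable locus cannot dominate the component of $\Spl_X$ containing $\cF$, so a general deformation is stable, completing the induction.

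The main obstacle is the uniform dimension estimate for strictly semistable simple sheaves across every possible Jordan-Hölder type; it reduces to the strict inequality $\sum_i r_i^2 < (\sum_i r_i)^2$ for non-trivial partitions, together with the explicit Mukai-pairing formula for $\Ext^1$. Some care is needed if JH factors repeat, but the estimate persists.
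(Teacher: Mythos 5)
Your overall strategy coincides with the paper's: induct on $r$, build a simple locally free Ulrich sheaf with Mukai vector $v_r=(2r,rH,-2r)$ as a non-split extension of lower-rank stable Ulrich bundles, deform it inside the smooth $2(r^2(g+3)+1)$-dimensional moduli of simple sheaves, and rule out strict semistability of the general deformation by a dimension count. The construction and simplicity of the extension, the computation $v_r^2+2=2(r^2(g+3)+1)$, and the openness of the Ulrich property are all fine.

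The gap is in the stability step. You assert that a strictly semistable member of the family ``admits a non-trivial filtration whose graded pieces $\cE_i$ are stable of Mukai vector $v_{r_i}$ for some partition $\sum r_i=r$.'' This is not justified and is not true in general. What one knows (from \cite[Lemma 7.1]{daniele-joan:wild}) is that the Jordan--Hölder factors $\cE_i(H)$ are Ulrich, i.e.\ the $\cE_i$ are stable with reduced Hilbert polynomial $\rp_0$; this pins down only $\rk(\cE_i)=u_i$, $c_1(\cE_i)\cdot H=u_i(g-1)$ and $\chi(\cE_i)=0$. It does \emph{not} force $u_i$ to be even, nor $c_1(\cE_i)$ to be a multiple of $H$. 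On a K3 with Picard number $>1$ there may be Ulrich line bundles (classes $L$ with $L^2=-4$, $L\cdot H=g-1$; these are rigid, with $v^2=-2$), odd-rank stable Ulrich bundles, or rank-$2$ factors with $c_1\ne H$ --- and for such factors your dimension formula $(g+3)(r^2+\sum r_i^2)+2k$ simply does not apply (a rigid line-bundle factor contributes $0$ rather than $2(g+3)+2$ to the parameter count, and the Mukai pairings $\langle v_i,v_j\rangle$ change). Note that even the rank-$2$ base case in the paper (Corollary \ref{stable-rank-2}) must separately exclude a JH filtration by two Ulrich line bundles, via an explicit estimate $\ext^1_X(\cL^*(H),\cL)\le g+7$ using $L^2=-4$. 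The paper handles the general case differently: a semicontinuity argument over a quasi-universal family on the moduli space $\rN$ of \emph{all} stable sheaves with Hilbert polynomial $u\rp_0$ (every rank $u<2r$, every $c_1$) shows that $\Hom_X(\cK,\cF_s)=0$ for generic $s$ and every such $\cK$ except possibly $\cK\in\rM_X(2,H,-2)$, because this vanishing holds at the special extension point $s_0$; only the remaining rank-$2$ case is then treated by a dimension count. To repair your argument you would either need to reproduce that semicontinuity step, or carry out a uniform dimension estimate over \emph{all} possible JH types with arbitrary stable Ulrich factors (including rigid ones), which is substantially more than the inequality $\sum_i r_i^2<(\sum_i r_i)^2$ you invoke.
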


\begin{proof}
  Given a coherent sheaf $\cE$ or rank $r > 0$ on $X$ we write
  $\rP(\cE) \in \QQ[t]$ for the Hilbert polynomial of $\cE$ and
  $\rp(\cE)$ for its reduced version, namely
  $\rP(\cE)=\chi(\cE(tH))$ and $\rp(\cE)=\rP(\cE)/r$. We put
  $\rp_0=(g-1)(t+1)t$ so that, if $\cE$ is an Ulrich
  sheaf, then $\rp(\cE(-H))=\rp_0$. Note that, if $\cE_1$ and $\cE_2$ are
  non-isomorphic stable sheaves with $\rp(\cE_1)=\rp(\cE_2)$, then
  $\Ext^k_X(\cE_i,\cE_j)=0$ for $k=0,2$ and $i \ne j$.

  The proof goes by induction on $r$, the case $r=1$ being given by
  Corollary \ref{stable-rank-2}. For $r \ge 1$, we select a stable
  bundle $\cE_2$ in $\rM_X(2r,rH,-2r)$ given by the induction
  hypothesis and a stable
  bundle $\cE_1$ in $\rM_X(2,H,-2)$, with $\cE_i(H)$ Ulrich for
  $i=1,2$, taking care  that 
  $\cE_1$ is not isomorphic to $\cE_2$ for $r=1$. This is of course
  possible since $\dim(\rM_X(2,H,-2))>0$. This way we have:
  \begin{align}
    \label{ext2} 
    & \Ext^k_X(\cE_i,\cE_j)=0, && \mbox{for $k=0,2$ and $i \ne j$}, \\
    \label{ext1} & \ext^1_X(\cE_i,\cE_j)=2r(g+3) && \mbox{for $i \ne j$.}
\end{align}

Note that, for any choice of $\zeta \in \PP(\Ext^1_X(\cE_2,\cE_1))$, the
sheaf $\cE^\zeta$ fitting as middle term of the associated extension
is a locally free semistable sheaf, with $\cE^\zeta(H)$ (as extension
of sheaves having these properties).
By direct computation, we see that it lies $\rM_X(2(r+1),(r+1)
H,-2(r+1))$. Of course this sheaf is not stable, as $\cE_1$ is a
sub-sheaf of $\cE^\zeta$ with quotient $\cE_2$ and the reduced
Hilbert polynomial of all these sheaves is $\rp_0$.
However, it follows by \cite[Theorem A, ii)]{daniele-joan:wild} that
$\cE^\zeta$ is simple, as the representation of the associated
Kronecker consists of a single non-zero map of one-dimensional vector
spaces, and as such it is simple. Alternatively one may apply
\cite[Proposition 5.3]{pons_llopis-tonini}.
  
  We record the defining
  sequence:
  \begin{equation}
    \label{zeta}
    0 \to \cE_1 \to \cE^\zeta \to \cE_2 \to 0.
  \end{equation}

  In the same spirit as in Lemma \ref{deform}, we take a deformation
  of $\cE^\zeta$ in the space of simple sheaves, which is unobstructed
  of dimension $2((r+1)^2(g+3)+1)$ at $\cE^\zeta$. We consider thus an
  integral quasi-projective variety $S$ as base of an $S$-flat family of
  simple sheaves $\cF_s$ with $\cF_s(H)$ Ulrich for all $s$ and $\cF_{s_0} \simeq \cE^\zeta$ for some
  $s_0 \in S$, the base $S$ being locally isomorphic to the moduli
  space of simple 
  sheaves around the point $s_0$. We may assume that $\cF_s$ is locally free
  for all $s\in S$.

  \medskip

  \begin{claim} \label{ilclaim}
   There is an open dense subset $S_0$ of $S$ such
  that, for any stable sheaf $\cK$ with $\rk(\cK) < 2(r+1)$, $\rk(\cK)
  \ne 2$ and
  $\rp(\cK)=\rp_0$,
  we have $\Hom_X(\cK,\cF_s)=0$, for all $s \in S_0$.
  \end{claim}
  
  \begin{proof}[Proof of the claim]
  Clearly 
  it suffices to find such open subset for a fixed rank $u$ of $\cK$
  and take the intersection of the corresponding open subsets for all
  $u < 2(r+1)$, $u \ne 2$.

  So let $\rN$ be the moduli space of stable sheaves $\cE$ on $X$ with Hilbert
  polynomial $\rP(\cE)=u \rp_0$.
  Let $\cU$ be a quasi-universal family over $X \times \rsta$,
  cf. \cite[Proposition 4.6.2]{huybrechts-lehn:moduli} and denote by
  $\sigma$ and $\pi$ the projection maps $X \times \rsta \to \rsta$ and $X
  \times \rsta \to X$, respectively.

  For $y \in \rsta$ let $\cU_y$ be the corresponding sheaf over
  $X$. We observe that, 
  applying $\Hom_X(\cU_y,-)$ to
  \eqref{zeta}, using the definition of $\rN$ and $\zeta$ and the fact that the $\cE_i$'s are stable with
  $\rp(\cE_i)=\rp(\cU_y)$ we get
  $\Hom_X(\cU_y,\cE^\zeta)=0$. 
  Indeed, the only case to check is for $u=2r$ when $y$ corresponds to
  the sheaf $\cE_2$, but $\Hom_X(\cE_2,\cE^\zeta)=0$, for otherwise by
  stability of $\cE_2$ the exact sequence \eqref{zeta} would split,
  contradicting our assumption on $\zeta$.

Then, Serre duality gives, for all $y \in \rN$:
  \begin{equation}
    \label{s0}
    \HH^2((\cE^\zeta)^* \otimes \cU_y)  \simeq \Ext^2_X(\cE^\zeta,\cU_y) =0.
  \end{equation}

  Now consider $X \times \rsta \times S$, put $\tau$ for the
  projection $\rsta \times S
  \to S$ and denote by $\bar \sigma$,
  $\bar \pi$, $\bar \tau$ the projection maps from $X \times \rsta \times
  S$ onto $X \times S$, $\rsta \times S$ and $X \times \rsta$,
  respectively.
  Let $\cV = \bar \pi^*(\cF^*) \otimes \bar \tau^*(\cU)$.
  Since $\cV$ is flat over the integral
  base $\rsta \times S$ and $\bar \sigma$ has relative dimension $2$,
  base-change gives, for all $(y,s) \in \rsta \times S$:
  \begin{equation}
    \label{base-change}
  \rR^2\bar \sigma_*(\cV)_{(y,s)} \simeq \HH^2(\cF_s^* \otimes \cU_y).    
  \end{equation}

  Let $W$ be the support of $\rR^2\sigma_*(\cV)$, i.e. the closed subset of points $(y,s) \in \rsta \times S$ such that
  $\rR^2\sigma_*(\cV)_{(y,s)} \ne 0$. By \eqref{s0} and
  \eqref{base-change}, we have $W \cap \rsta \times \{s_0\} =
  \emptyset$, i.e. $s_0$ does not lie in $\tau(W)$. Then there is an
  open neighbourhood $S_0 \subset S$ of $s_0$ which is disjoint from
  $\tau(W)$. Again by \eqref{base-change}, we get $\HH^2(\cF_s^*
  \otimes \cU_y)=0$ for all $(y,s) \in \rsta \times S_0$, which 
  proves the claim.
\end{proof}

Let us now conclude the proof of the theorem. In view of the claim, we
have two alternatives for
$s$ generic in $S_0$: either $\Hom(\cK,\cF_s)=0$ for any stable sheaf $\cK$ with $\rk(\cK) < 2(r+1)$ and
  $\rp(\cK)=\rp_0$, or otherwise this happens for all such $\cK$ 
  except for $\rk(\cK)=2$ and there actually exists a stable $\cK$ in
  $\rsta$ such that $\Hom(\cK,\cF_s) \ne 0$.

  In the first alternative $\cF_s$ is stable, so we assume
  that the second one takes place and look for a contradiction. 
  We go back to  Claim \ref{ilclaim} and carry out the
  same argument for $u=2$, with $y_0$ being the point corresponding to
  $\cE_1$. Observe that $\cK$ must lie in $\rM_X(2,H,-2)$ as the proof of
  Claim
  \ref{ilclaim} applies verbatim on any other component of $\rsta$.

We note that $W \cap \rsta \times \{s_0\}=\{(y_0,s_0)\}$,
  as clearly $\Hom_X(\cK,\cE^\zeta)=0$ for all $\cK$ in $\rsta
  \setminus \{y_0\}$. So $W$ is properly contained in $\rsta\times S$. Moreover, we easily have
  $\hom_X(\cE_1,\cE^\zeta)=1$. Recall by construction of the
  quasi-universal family that there is $u_0$ such
  that $\rk(\cU)=2u_0$ and that, for $y \in \rsta$, the sheaf $\cU_y$ is a
  direct sum of $u_0$ copies of the stable sheaf of rank $2$ in
  $\rM_X(2,H,-2)$ corresponding to $y$.
  Therefore, the sheaf $\rR^2\bar \sigma_*(\cV)_{(y,s)}$ has rank at
  least $u_0$ at any $(y,s) \in W$, and rank precisely $u_0$ at $(y_0,s_0)$.
  So there is an open dense subset $W_0$ of $W$ where $\rR^2\bar
  \sigma_*(\cV)$ is free of rank $u_0$. For any $(y,s) \in W_0$, the
   stable sheaf $\cK$ corresponding to $y$ satisfies
   $\hom_X(\cK,\cF_s)=1$; up to proportionality we have thus a unique
   non-zero map $\eta_{y,s} : \cK \to \cF_s$.
   Stability easily impies that $\eta_{y,s}$ 
  is injective, so there is an exact
  sequence:
  \[
  0 \to \cK \to \cF_s \to \cK' \to 0,
  \]
  for a well-defined sheaf $\cK'=\cok(\eta_{y,s})$, for all $(y,s) \in W_0$.

  For $s=s_0$ the sheaf $\cK'$ is just
  $\cE_2$ so, by openness of stability, up to shrinking $W_0$ we may
  assume that $\cK'$ is stable for all $(y,s) \in W_0$. Note that $\cK'$
  lies in $\rM(2r,rH,-2r)$.

  Under our assumption, such sequence should exist for any $s$ in an open
  neighbourhood of $s_0$. Then the family of sheaves $\cF$ should be
  dominated by the family of extensions of $\cK$ by $\cK'$ as $s$
  varies around $s_0$. We see that the
  dimension of this family of extensions 
is:
  \[
   \dim(\rM_X(2,H,-2))+\dim(\rM_X(2r,rH,-2r))+\dim(\PP\Ext^1_X(\cK',\cK)),
  \]
  which equals $2(r(r+1)+1)(g+3)+3$, as it follows by formulas
  \eqref{ext2}, \eqref{ext1} applied to $\cK$ and $\cK'$ instead
  of $\cE_1$ and $\cE_2$. On the other hand, the dimension
  of $S$ is $2((r+1)^2(g+3)+1)$. The difference of these
  dimensions is $2r(g+3)-1$ and since this is always positive for
  $r \ge 1$, $g\ge 3$, we get that the family of simples sheaves appearing as extensions cannot be dense in
  $S_0$. This contradiction concludes the proof.
  \end{proof}

The previous result is in some sense optimal as general
K3 surfaces do not support Ulrich bundles of odd rank,
cf. \cite[Corollary 2.2]{aprodu-ortega-farkas:crelle}.

\begin{rmk}
  An argument similar to the one of Theorem \ref{main} has been used to
construct ACM and Ulrich bundles on Fano threefolds of index $1$. Indeed, it
follows from the main result of \cite{brafa2} that any smooth Fano threefold of Picard number
$1$ and index $1$, containing a line $L$ with normal bundle $\cO_L \oplus
\cO_L(-1)$ (such a
threefold was called ``ordinary'' in that paper) admits an Ulrich
bundle of rank $2$. Ulrich sheaves of rank $2$ are precisely ACM
sheaves $\cE$ with $c_1(\cE(-H))=H$ and $c_2(\cE(-H))=(g+3)L$, where
$L \subset X$ is a line. We do not
 know if the same result holds for non-ordinary threefolds.
\end{rmk}

\begin{rmk}
  Theorem \ref{main} implies for instance that any integral quartic surface  supports
  an Ulrich bundle of rank $2$. 
  If $X$ is not integral, then $X$ must the union of (possibly multiple) surfaces of
  degree $\ge 3$.
  For each component it is possible to find a rank-$2$
  Ulrich bundle, we refer to \cite[Lemma
  7.2]{daniele-joan:wild} for the slightly delicate case 
  of singular cubic surfaces. This yields existence of an Ulrich sheaf of rank $2$
  on an arbitrary quartic surface.

 However the resulting sheaf will fail to
  be locally free over the intersection of the components. Finding
  locally free Ulrich sheaves of rank $2$ seems more tricky when $X$
  is not irreducible and might be impossible when $X$ is not reduced. To justify this let us mention that, for
  instance if $X$ the union of two distinct double planes, the rank of
  any locally free Ulrich sheaf on $X$
  must be a multiple of $4$ by \cite[Proposition
  4.14]{ballico-huh-malaspina-pons-llopis-to-appear}.
\end{rmk}

\medskip

I would like to thank M. Aprodu, G. Casnati, A. Perego, J. Pons-Llopis and
P. Stellari for useful discussions.
I am grateful to the referee for useful remarks.

 \bibliography{ulrich-K3.final.bib}

\bibliographystyle{alpha}

\end{document}